\documentclass[12pt,reqno]{amsart}

\usepackage[T1]{fontenc}
\usepackage[utf8]{inputenc}
\usepackage{amsmath,amssymb,amsthm,amsfonts}
\usepackage{mathrsfs}
\usepackage{mathtools}
\usepackage{bm}
\usepackage{enumitem}
\usepackage{graphicx}
\usepackage[colorlinks=true,
bookmarks=true,
bookmarksnumbered=true,
bookmarkstype=toc,
linkcolor=blue,
urlcolor=blue,
citecolor=blue]{hyperref}

\theoremstyle{plain}
\newtheorem{theorem}{Theorem}[section]
\newtheorem{lemma}[theorem]{Lemma}
\newtheorem{corollary}[theorem]{Corollary}
\newtheorem{proposition}[theorem]{Proposition}

\theoremstyle{definition}
\newtheorem{definition}[theorem]{Definition}

\theoremstyle{remark}
\newtheorem{remark}[theorem]{Remark}

\makeatletter

\@addtoreset{equation}{section}
\makeatother

\title[Ellipsoidal Fractional Obstacle Problems]{
Ellipsoidal Positivity Sets for Fractional Obstacle Problems with Quadratic Forcing
}

\author[S. Kitano]{Shuhei Kitano}
\address[S. Kitano]{
Research Institute for Science and Engineering, Waseda University,
3-4-1 Okubo, Shinjuku-ku, Tokyo 169-8555, Japan
}
\email{sk.koryo@moegi.waseda.jp}

\subjclass[2020]{Primary 35R35; Secondary 35R11, 35J86.}
\keywords{
Fractional Laplacian, obstacle problem, thin obstacle problem,
quadratic forcing, ellipsoidal positivity sets.
}
\thanks{
This work was supported by the Japan Society for the Promotion of Science (JSPS) KAKENHI
Grant-in-Aid for Early-Career Scientists and Grant-in-Aid for JSPS Fellows
[grant number JP25KJ0086].
}

\begin{document}

\begin{abstract}
Let \(n\ge1\), \(0<s<1\), \(c>0\), and let \(A\) be a positive definite symmetric matrix.
We prove that the unique decaying viscosity solution of
\[
\min\{u,\,(-\Delta)^s u-(c-\langle Ax,x\rangle)\}=0
\qquad\text{in }\mathbb R^n
\]
has the form
\[
u(x)=K\max\{1-\langle Bx,x\rangle,0\}^{1+s}
\]
for some \(K>0\) and some positive definite symmetric matrix \(B\).
In particular, its positivity set is an ellipsoid.
For \(s=1/2\) and \(n\ge2\), this result, combined with the classification of Fern\'andez-Real and Yu,
implies that cubic global thin obstacle solutions with nonempty bounded positivity set have
ellipsoidal positivity sets.
This proves the conjecture of Fern\'andez-Real and Yu in the nonempty bounded-positivity case.
\end{abstract}

\maketitle

\section{Introduction}\label{sec:introduction}

Let \(n\ge1\) and \(0<s<1\). We consider the fractional obstacle problem
\begin{equation}\label{eq:obstacle}
\min\{u,\,(-\Delta)^s u-f\}=0
\qquad\text{in }\mathbb R^n.
\end{equation}
Throughout, for a symmetric matrix \(M\), we write \(M>0\) if \(M\) is positive definite.
We consider the quadratic forcing
\begin{equation}\label{eq:forcing}
f(x)=c-\langle Ax,x\rangle,
\qquad c>0,\quad A=A^T>0,
\end{equation}
and the decay condition
\[
u(x)\longrightarrow0
\qquad\text{as }|x|\to\infty.
\]
Here \((-\Delta)^s\) is normalized as the Fourier multiplier with symbol \(|\xi|^{2s}\).
For \(v\in C_c^\infty(\mathbb R^n)\), equivalently,
\[
(-\Delta)^s v(x)
=
c_{n,s}\,\mathrm{P.V.}
\int_{\mathbb R^n}
\frac{v(x)-v(z)}{|x-z|^{n+2s}}\,dz,
\]
where \(c_{n,s}>0\) is the normalization constant and \(\mathrm{P.V.}\) denotes the Cauchy principal value.
We understand \eqref{eq:obstacle} in the viscosity sense.
We use the notation \(t_+:=\max\{t,0\}\).

Our main result gives an explicit description of the solution and its positivity set.

\begin{theorem}[Ellipsoidal solution for quadratic forcing]\label{thm:main}
Let \(n\ge1\), \(0<s<1\), \(c>0\), and let \(A\) be a positive definite symmetric \(n\times n\) matrix.
Then there exist \(K>0\) and a positive definite symmetric matrix \(B\) such that
\[
u(x)=K\bigl(1-\langle Bx,x\rangle\bigr)_+^{1+s}
\]
is the unique decaying viscosity solution of \eqref{eq:obstacle}--\eqref{eq:forcing}.
Moreover, \(B\) satisfies
\[
\frac{A_B}{c_B}=\frac{A}{c},
\]
where \(c_B\) and \(A_B\) are given by \eqref{eq:cB} and \eqref{eq:AB}, respectively.
\end{theorem}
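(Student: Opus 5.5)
We will build the solution from an explicit formula for the fractional Laplacian of the ansatz, then solve a finite-dimensional matching problem for \(B\) and \(K\), and obtain uniqueness from a comparison principle. Throughout we take as given the definitions \eqref{eq:cB} and \eqref{eq:AB} of \(c_B\) and \(A_B\). These are the constant and the quadratic part that \((-\Delta)^s\) produces when applied to the normalized profile \(w_B(x):=(1-\langle Bx,x\rangle)_+^{1+s}\) inside the ellipsoid \(E_B=\{\langle Bx,x\rangle<1\}\).

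\textbf{Step 1: the profile is a supersolution-solution pair.} First we verify the classical identity
\[
(-\Delta)^s w_B(x)=c_B-\langle A_Bx,x\rangle \quad\text{in } E_B.
\]
For \(B=I\) this is the Dyda/Riesz computation: \((-\Delta)^s(1-|x|^2)_+^{s+m}\) is a polynomial of degree \(2m\) in the ball. For general \(B\) we write \(B=Q^TDQ\), change variables \(x=D^{-1/2}Qy\), and use the Fourier representation of \((-\Delta)^s\) to get an explicit integral formula (over the sphere) for \(c_B\) and \(A_B\). Two further facts are needed. \(w_B\in C^{1,s}\), so \(w_B\) touches \(0\) with vanishing gradient on \(\partial E_B\). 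Outside \(\overline{E_B}\) we have \((-\Delta)^s w_B<0\) wherever \(w_B=0\), since the integrand \(-w_B(z)\le0\) and is negative on a set of positive measure. Hence \(u=Kw_B\) satisfies \(\min\{u,(-\Delta)^su-f\}=0\) in the viscosity sense precisely when
\[
Kc_B=c,\qquad KA_B=A,
\]
and in addition \(f\le (-\Delta)^s u\) holds off \(E_B\). That last condition must be checked; it is plausible because \(f<0\) far away. Near \(\partial E_B\) it follows from continuity of \((-\Delta)^s u\) across the boundary (the \(1+s\) power makes it continuous) combined with \((-\Delta)^s u\ge f\) up to the boundary. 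Some care is needed there, and it may require comparing the polynomial \(f\) with the exterior formula.

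\textbf{Step 2: solving \(A_B/c_B=A/c\) (main obstacle).} Since \(A_B\) and \(c_B\) are homogeneous of degrees \(1+s\) and \(s\) under the scaling \(B\mapsto\lambda B\), we can normalize and look for \(B\) with the ratio map
\[
\Phi(B)=A_B/c_B
\]
hitting \(A/c\). We would first check that \(A_B>0\) for every \(B>0\), using the integral formula with a positive kernel. Next we use equivariance under orthogonal conjugation to reduce to diagonal \(A\), and seek \(B\) diagonal. This reduces the problem to an \(n\)-dimensional equation in the eigenvalues \(b_1,\dots,b_n\).

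We then attempt a degree or continuity argument: \(\Phi\) is continuous on the cone of positive diagonal matrices, and the boundary behavior must be controlled. As \(b_i\to0\) with the others fixed, the ratio \((A_B)_{ii}/(A_B)_{jj}\to0\); as \(b_i\to\infty\) it tends to \(\infty\). This lets us apply a Brouwer or Poincaré–Miranda argument on the simplex of normalized eigenvalues. Alternatively, monotonicity, meaning \((A_B)_{ii}/(A_B)_{jj}\) increases in \(b_i\), would give existence by a variational route: minimize a convex energy \(\int(\tfrac12 u(-\Delta)^su-fu)\) over ellipsoidal profiles. Establishing these asymptotics and monotonicity from the sphere integral is where most of the work lies. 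With \(B\) found, \(K=c/c_B\) gives \(KA_B=A\).

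\textbf{Step 3: uniqueness.} If two decaying viscosity solutions exist, we apply the comparison principle for the obstacle problem. On \(\{u>v\}\), which is bounded because both functions decay and the positivity sets are bounded (since \(f<0\) outside a compact set), \(u\) satisfies the equation while \(v\) is a supersolution. The maximum principle for \((-\Delta)^s\), applied to \(u-v\) with decay at infinity, then forces \(u\le v\); by symmetry \(u=v\).
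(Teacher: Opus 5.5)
There is a genuine gap. The exterior inequality \((-\Delta)^s u\ge f\) on \(\mathbb R^n\setminus E_B\) is flagged in Step~1 but never proved, and the heuristics you give cannot prove it. Outside \(\overline{E_B}\) you yourself note that \((-\Delta)^s u<0\). So knowing \(f<0\) far away settles only the regime \(|x|\to\infty\); in between you must compare two negative quantities. Near \(\partial E_B\), continuity of \((-\Delta)^s u\) gives \((-\Delta)^s u=f\) \emph{on} \(\partial E_B\). That is an equality, and it carries no sign information for \((-\Delta)^s u-f\) just outside, so appealing to ``\((-\Delta)^s u\ge f\) up to the boundary'' is circular. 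Without this inequality you have not shown that \(Kw_B\) solves the obstacle problem at all.

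The missing idea is monotonicity along outward normal rays \(x_t=q+t\nu\), with \(\nu=Bq/|Bq|\) and \(q\in\partial E_B\):
\begin{itemize}
\item \(f\) decreases along the ray. \(A_B\) commutes with \(B\) (diagonalize \(B\) and use reflection symmetry in \eqref{eq:AB}), and \(A/c=A_B/c_B\), so \(A\) and \(B\) commute. Hence \(\langle Aq,Bq\rangle>0\) and \(\frac{d}{dt}f(x_t)=-2\langle Aq,Bq\rangle/|Bq|-2t\langle A\nu,\nu\rangle<0\).
\item \((-\Delta)^s u\) is nondecreasing along the ray. For \(t>0\), \((-\Delta)^s u(x_t)=-c_{n,s}\int_{E_B}u(z)|x_t-z|^{-n-2s}\,dz\). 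Convexity of \(E_B\) gives \((q-z)\cdot\nu\ge0\) for \(z\in E_B\), so \(|x_t-z|\) increases with \(t\).
\item Conclusion. Write any exterior point as \(q+t\nu(q)\), with \(q\) its projection onto \(\partial E_B\). Using continuity at \(t=0\), you get \((-\Delta)^s u(x)\ge(-\Delta)^s u(q)=f(q)\ge f(x)\).
\end{itemize}

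Step~2 is also left open, although your plan is sound and the missing estimate is short. For diagonal \(B\), \eqref{eq:AB} gives \(F_i(B)=b_i(1+2s\rho_i)\) with \(\rho_i=b_i\int\theta_i^2\,d\mu_{B,s}/\int d\mu_{B,s}\). Since \(\sum_i b_i\theta_i^2=1\) on \(\partial E_B\), this forces \(0<\rho_i<1\). Hence \(b_i<F_i(B)\le(1+2s)b_i\). This yields your boundary asymptotics and lets Poincar\'e--Miranda on the box \(\prod_i[m_i/(1+2s),m_i]\) solve \(F(B)=A/c\) directly; no normalization or monotonicity is needed.

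Discard the variational alternative. A minimizer of the energy over the finite-dimensional family of ellipsoidal profiles need not solve the obstacle problem; assuming it does presupposes the ellipsoidal rigidity you are trying to prove.

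In Step~3, boundedness of the positivity set of an arbitrary decaying solution is not available a priori. Work on \(\{u>v+\varepsilon\}\) instead, which is bounded because \(u\to0\). Also use a viscosity comparison principle for \(-(-\Delta)^s\) (e.g.\ Caffarelli--Silvestre) rather than a maximum principle for \(u-v\): a difference of viscosity solutions is not directly a viscosity solution of anything.
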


In particular,
\[
\{u>0\}
=
\{x\in\mathbb R^n:\langle Bx,x\rangle<1\},
\]
so the positivity set is an ellipsoid.

The case \(s=1/2\) has a direct application to global solutions of the thin obstacle problem.
We refer to Section~\ref{sec:thin} for the precise formulation of the problem and for the definitions of
the thin space, polynomial growth, and cubic growth used below.
Fern\'andez-Real and Yu \cite{FRY} recently classified global solutions with polynomial growth
and bounded positivity set in terms of a polynomial asymptotic at infinity.
In the cubic case, they proved that the positivity set is star-shaped and conjectured that it is an ellipsoid;
see \cite[Conjecture~4.2]{FRY}.
In the nonempty bounded-positivity case, we show that, after a translation in the thin variables,
their cubic problem reduces to \eqref{eq:obstacle} with \(s=1/2\) and positive definite quadratic forcing.
Theorem~\ref{thm:main} therefore gives the following result.

\begin{theorem}[Cubic global thin obstacle solutions]\label{thm:FRY}
Let \(n\ge2\), and let \(W\) be a global solution of the thin obstacle problem in \(\mathbb R^{n+1}\)
with cubic growth and nonempty bounded positivity set on the thin space.
Then, after a translation in the thin variables, there exist \(K>0\) and a positive definite symmetric
matrix \(B\) such that
\[
\{W(\cdot,0)>0\}
=
\{x\in\mathbb R^n:\langle Bx,x\rangle<1\}
\]
and
\[
W(x,0)
=
K\bigl(1-\langle Bx,x\rangle\bigr)_+^{3/2}.
\]
In particular, the positivity set on the thin space is an ellipsoid.
Hence the conjecture of Fern\'andez-Real and Yu \cite[Conjecture~4.2]{FRY}
holds in the nonempty bounded-positivity case.
\end{theorem}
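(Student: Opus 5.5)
The plan is to reduce Theorem~\ref{thm:FRY} to Theorem~\ref{thm:main} with \(s=1/2\). I will use the Caffarelli--Silvestre extension: the trace of a thin obstacle solution solves a half-Laplacian obstacle problem. Let \(W\) be a global solution of the thin obstacle problem in \(\mathbb R^{n+1}\) with cubic growth and nonempty bounded positivity set \(\Omega=\{W(\cdot,0)>0\}\).

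The first step invokes the classification of Fern\'andez-Real and Yu \cite{FRY}. For such \(W\) it gives a decomposition
\[
W=P+V,
\]
where \(P\) is a cubic polynomial asymptotic, harmonic in \(\mathbb R^{n+1}\) and even in the thin-normal variable. The remainder \(V\) is the Poisson (Caffarelli--Silvestre) extension of a function \(v=W(\cdot,0)-P(\cdot,0)\) that decays at infinity; it is of order \(|x|^{1-n}\) or at least \(o(1)\) in the thin directions. On the thin space \(W(\cdot,0)\ge0\). Off \(\overline\Omega\) we have \(W(\cdot,0)=0\). Setting \(u:=W(\cdot,0)\), which is compactly supported, the thin obstacle conditions become
\[
\min\{u,\,(-\Delta)^{1/2}u-f\}=0,
\]
with \(f\) determined by \(P\). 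Since \(P\) is even and harmonic with \(\partial_y\)-trace contributions, I expect the correct statement to be \(f=-\partial_yP(\cdot,0^+)\) up to a constant, with \(u=W(\cdot,0)\) compactly supported. Because \(P\) has degree \(3\), \(f\) is a polynomial of degree at most \(2\) in \(x\).

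The second step is to pin down the shape of \(f\). Finiteness of \(u\) and compactness of \(\Omega\) force the quadratic part \(-\langle Ax,x\rangle\) to satisfy \(A>0\). Otherwise \(f>0\) somewhere at infinity would contradict \(u=0\) there, since \((-\Delta)^{1/2}u<0\) outside the support of a nonnegative nonzero compactly supported \(u\). I expect FRY's theorem to already give this nondegeneracy in the bounded case. Then \(f(x)=c_0+\langle b,x\rangle-\langle Ax,x\rangle\). Translating in the thin variables by \(x_0=\tfrac12A^{-1}b\) puts it in the form \(c-\langle Ax,x\rangle\). Nonemptiness of \(\Omega\) gives \(c>0\): if \(c\le0\), then \(f\le0\) everywhere, and the comparison principle yields \(u\equiv0\).

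The third step applies uniqueness in Theorem~\ref{thm:main}. The function \(u\) is a decaying viscosity solution, so it equals \(K(1-\langle Bx,x\rangle)_+^{3/2}\). The ellipsoidal form of \(\{W(\cdot,0)>0\}\) follows immediately.

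I expect the main obstacle to be the first step. It requires matching FRY's precise classification, including the exact relation between their cubic asymptotic and the forcing, and checking that their notion of solution yields a viscosity solution in our sense. The hypothesis \(n\ge2\) presumably enters through their decay statement.
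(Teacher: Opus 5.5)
Your overall route is the paper's: reduce, via the asymptotic of Fern\'andez-Real and Yu and the Dirichlet-to-Neumann characterization of $(-\Delta)^{1/2}$, to a decaying half-Laplacian obstacle problem with quadratic forcing; normalize by a translation; and apply Theorem~\ref{thm:main} with uniqueness. However, your first step, which is where the forcing is produced, is set up wrongly. The asymptotic in \cite[Theorem~1.1]{FRY} is not a harmonic polynomial even in $y$. It is $-|y|q(x,y)$ with $q\in\mathcal P^o_{n+1}$ even in $y$ and $\Delta(yq)=0$, i.e.\ the even reflection of the odd harmonic cubic $-yq$. It is not harmonic across the thin space: $\Delta(-|y|q)=-2q(x,0)\,\delta_{\{y=0\}}$. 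With your $P$ the reduction collapses. A polynomial even in $y$ has $\partial_yP(\cdot,0)=0$, so it cannot produce any forcing. Moreover $P(\cdot,0)=u-v$ is a decaying polynomial, hence zero, and a harmonic polynomial with vanishing Cauchy data on $\{y=0\}$ vanishes identically. You would end up with $f\equiv0$ and therefore $u\equiv0$, contradicting the nonempty positivity set.

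The missing idea is to subtract the odd polynomial in the upper half-space: for $y\ge0$ set $V:=W+yq$. It is harmonic since $\Delta(yq)=0$, and it tends to $0$ at infinity by the asymptotic. Its trace is $V(\cdot,0)=W(\cdot,0)=u$ since $yq$ vanishes on $\{y=0\}$. So $V$ is the decaying harmonic extension of $u$, and
\[
(-\Delta)^{1/2}u=-\partial_y^+V=-\partial_y^+W-q(\cdot,0).
\]
Hence $f=-q(\cdot,0)$, which is the kink of $-|y|q$ across the thin space. In your notation $f=+\partial_y^+P(\cdot,0)$, not $-\partial_yP$. The Signorini conditions $\partial_y^+W\le0$ and $u\,\partial_y^+W=0$ are meaningful pointwise thanks to the $C^{1,1/2}_{\mathrm{loc}}$ regularity of $W$ up to $\{y=0\}$, and they give the obstacle problem. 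Your second step is then a legitimate and more self-contained alternative to the paper's, which reads boundedness of $\{q(\cdot,0)<0\}$ off the definition of $\mathcal P^o_{n+1}$ and excludes the empty case by FRY's uniqueness applied to $-|y|q$. But ``otherwise $f>0$ somewhere at infinity'' is not quite true. If $A\ge0$ is singular with $b\perp\ker A$ and $\sup f\le0$ (e.g.\ $f=-1-x_1^2$), then $f<0$ everywhere. This case must be ruled out by the same comparison argument ($u\equiv0$) that you use for $c>0$.
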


\begin{remark}[Empty positivity set]\label{rem:empty}
The classification of Fern\'andez-Real and Yu also allows the positivity set of \(W\) on the thin space
to be empty.
In the notation of their classification, if \(q(x,0)\ge0\) for every \(x\), then
\[
W(x,y)=-|y|q(x,y)
\]
is the corresponding global solution, and
\[
\{W(\cdot,0)>0\}=\varnothing.
\]
Thus the nonemptiness assumption in Theorem~\ref{thm:FRY} excludes this empty-positivity case.
\end{remark}

This result complements earlier rigidity results for global solutions of the thin obstacle problem.
Eberle, Ros-Oton and Weiss \cite{ERW} obtained a polynomial-asymptotic classification for global solutions
with compact coincidence set, while Eberle and Yu \cite{EY} proved ellipsoidal rigidity for compact contact
sets of solutions with at most quadratic growth.
Fern\'andez-Real and Yu \cite{FRY} showed that the superquadratic regime allows substantially greater
flexibility for compact contact sets and developed instead a classification of solutions with bounded
positivity set.
Theorem~\ref{thm:FRY} shows that the first superquadratic case, namely cubic growth, still exhibits
ellipsoidal rigidity when the positivity set is nonempty and bounded.

The main ingredient in the proof of Theorem~\ref{thm:main} is the explicit formula of
Abatangelo, Jarohs and Salda\~na \cite{AJS} for fractional Laplacians of powers of ellipsoidal defining functions.
In particular, their formula shows that
\[
\psi_B(x)=\bigl(1-\langle Bx,x\rangle\bigr)_+^{1+s}
\]
has a quadratic fractional Laplacian inside the ellipsoid
\[
E_B=\{x\in\mathbb R^n:\langle Bx,x\rangle<1\}.
\]
Matching this quadratic expression with a prescribed forcing reduces to the surjectivity of a
finite-dimensional coefficient map, which we prove using the Poincar\'e--Miranda theorem.
We then verify the obstacle inequality outside the ellipsoid by comparing the forcing and the fractional
Laplacian along outward normal rays.
Uniqueness of the obstacle problem finally yields the uniqueness of the ellipsoidal profile.

The paper is organized as follows.
In Section~\ref{sec:preliminaries}, we recall the formula of Abatangelo, Jarohs and Salda\~na
and introduce the associated coefficient map.
In Section~\ref{sec:surjectivity}, we prove the surjectivity of this map.
Section~\ref{sec:construction} constructs the ellipsoidal solution and proves its uniqueness,
completing the proof of Theorem~\ref{thm:main} and the bijectivity of the coefficient map.
Section~\ref{sec:thin} establishes the connection with cubic global solutions of the thin obstacle problem
and proves Theorem~\ref{thm:FRY}.
Finally, Section~\ref{sec:beyond} briefly discusses the extension beyond quadratic forcing.

\section{Preliminaries}\label{sec:preliminaries}

We first introduce some notation and recall the basic ingredients used below.

For \(a,b\in\mathbb R^n\), we write \(a\otimes b:=ab^T\).
We denote by \(C(\mathbb R^n)\) the space of continuous functions on \(\mathbb R^n\),
by \(C_b(\mathbb R^n)\) the space of bounded continuous functions on \(\mathbb R^n\),
and by \(C^{1,s}(\mathbb R^n)\) the space of continuously differentiable functions whose first
derivatives are \(s\)-H\"older continuous.
We also set
\[
\mathcal S^n_+
:=
\{B\in\mathbb R^{n\times n}:B=B^T,\ B>0\},
\qquad
\mathbb S^{n-1}
:=
\{x\in\mathbb R^n:|x|=1\}.
\]

Given \(u\in C_b(\mathbb R^n)\), \(x_0\in\mathbb R^n\), \(r>0\), and
\(\phi\in C^2(B_r(x_0))\cap C(\overline{B_r(x_0)})\), define
\[
\begin{aligned}
\mathcal L_r[\phi,u](x_0)
:={}&
c_{n,s}\,\mathrm{P.V.}
\int_{B_r(x_0)}
\frac{\phi(x_0)-\phi(y)}{|x_0-y|^{n+2s}}\,dy\\
&+
c_{n,s}
\int_{\mathbb R^n\setminus B_r(x_0)}
\frac{\phi(x_0)-u(y)}{|x_0-y|^{n+2s}}\,dy.
\end{aligned}
\]
The principal value and the tail integral are finite under these assumptions.

\begin{definition}[Viscosity solutions of the obstacle problem]\label{def:viscosity}
Let \(f\in C(\mathbb R^n)\).

A function \(u\in C_b(\mathbb R^n)\) is a viscosity subsolution of \eqref{eq:obstacle}
if, whenever \(x_0\in\mathbb R^n\), \(r>0\), and
\(\phi\in C^2(B_r(x_0))\cap C(\overline{B_r(x_0)})\) are such that
\[
u(x_0)=\phi(x_0),
\qquad
u\le\phi
\quad\text{in }B_r(x_0),
\]
one has \(u(x_0)\le0\) or
\[
\mathcal L_r[\phi,u](x_0)-f(x_0)\le0.
\]

A function \(u\in C_b(\mathbb R^n)\) is a viscosity supersolution of \eqref{eq:obstacle}
if \(u\ge0\) in \(\mathbb R^n\) and, whenever \(x_0\in\mathbb R^n\), \(r>0\), and
\(\phi\in C^2(B_r(x_0))\cap C(\overline{B_r(x_0)})\) are such that
\[
u(x_0)=\phi(x_0),
\qquad
u\ge\phi
\quad\text{in }B_r(x_0),
\]
one has
\[
\mathcal L_r[\phi,u](x_0)-f(x_0)\ge0.
\]

A function \(u\in C_b(\mathbb R^n)\) is a viscosity solution of \eqref{eq:obstacle}
if it is both a viscosity subsolution and a viscosity supersolution.
\end{definition}

Let \(B\in\mathcal S^n_+\) and set
\[
E_B:=\{x\in\mathbb R^n:\langle Bx,x\rangle<1\},
\qquad
\psi_B(x):=\bigl(1-\langle Bx,x\rangle\bigr)_+^{1+s}.
\]
For \(\theta\in\partial E_B\), define
\[
\mu_{B,s}(d\theta)
:=
\frac{dS(\theta)}
{|\theta|^{n+2s}|B\theta|},
\]
where \(dS\) denotes the Euclidean surface measure on \(\partial E_B\).

\begin{proposition}[Abatangelo--Jarohs--Salda\~na]\label{prop:AJS}
There exists a constant \(\kappa_{n,s}>0\) depending only on \(n\) and \(s\) such that,
for \(x\in E_B\),
\[
(-\Delta)^s\psi_B(x)
=
c_B-\langle A_Bx,x\rangle,
\]
where
\begin{equation}\label{eq:cB}
c_B
=
\kappa_{n,s}
\int_{\partial E_B}\mu_{B,s}(d\theta)
\end{equation}
and
\begin{equation}\label{eq:AB}
A_B
=
\kappa_{n,s}
\left[
B\int_{\partial E_B}\mu_{B,s}(d\theta)
+
2s
\int_{\partial E_B}
(B\theta)\otimes(B\theta)\,
\mu_{B,s}(d\theta)
\right].
\end{equation}
\end{proposition}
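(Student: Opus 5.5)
The plan is to reduce the $n$-dimensional computation to a one-dimensional identity along lines through $x$. The key observation is that the restriction of $\psi_B$ to any line is, up to scaling and translation, the profile $(1-t^2)_+^{1+s}$. After that, the result follows by integrating over directions and converting the integral over the sphere into one over $\partial E_B$.

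\emph{Step 1 (polar decomposition).} Fix $x\in E_B$. Since $\psi_B\in C^{1,s}$ is compactly supported, the second-difference form of $(-\Delta)^s$ is absolutely convergent near $x$, where $\psi_B$ is smooth. Writing $z=x+t\omega$ with $\omega\in\mathbb S^{n-1}$ and $t\in\mathbb R$, Fubini gives
\[
(-\Delta)^s\psi_B(x)=\frac{c_{n,s}}{2}\int_{\mathbb S^{n-1}}\int_{\mathbb R}\frac{\psi_B(x)-\psi_B(x+t\omega)}{|t|^{1+2s}}\,dt\,d\omega,
\]
with the inner integral taken as a principal value. The factor $|t|^{n-1}$ from polar coordinates cancels against $|t|^{n+2s}$, leaving the one-dimensional kernel $|t|^{-1-2s}$. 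The inner integral is therefore a constant multiple of the one-dimensional $(-\Delta)^s$ of $g_\omega(t):=\psi_B(x+t\omega)$, evaluated at $t=0$.

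\emph{Step 2 (the line profile).} Set $a=a(\omega)=\langle B\omega,\omega\rangle$. Then
\[
1-\langle B(x+t\omega),x+t\omega\rangle=a\bigl(R^2-(t-m)^2\bigr),
\]
where
\[
m=-\frac{\langle Bx,\omega\rangle}{a},\qquad R^2=\frac{1-\langle Bx,x\rangle}{a}+m^2 .
\]
Hence $g_\omega(t)=a^{1+s}\bigl(R^2-(t-m)^2\bigr)_+^{1+s}$. The one-dimensional input is
\[
(-\Delta)^s_{\mathbb R}(1-t^2)_+^{1+s}=C_s\bigl(1-(1+2s)t^2\bigr)\qquad\text{for } |t|<1 .
\]
This is Dyda's explicit formula in dimension one; alternatively, it follows from the Fourier/hypergeometric computation for $(1-|x|^2)_+^{s+k}$ with $k=1$. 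By translation and the scaling $t\mapsto Rt$, the function is homogeneous of degree $2+2s$ and the operator of degree $-2s$. So at $t=0$, which lies inside $(m-R,m+R)$, we obtain
\[
C_s\,a^{1+s}\bigl(R^2-(1+2s)m^2\bigr)=C_s\,a^{s}\Bigl[(1-\langle Bx,x\rangle)-2s\frac{\langle Bx,\omega\rangle^2}{a}\Bigr].
\]
This expression is quadratic in $x$.

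\emph{Step 3 (integration and change of variables).} Integrating over $\omega$ gives $(-\Delta)^s\psi_B(x)=\tilde c-\langle\tilde Ax,x\rangle$, where
\[
\tilde c=\kappa\int_{\mathbb S^{n-1}}a^s\,d\omega,\qquad
\tilde A=\kappa\int_{\mathbb S^{n-1}}\bigl(a^sB+2s\,a^{s-1}B\omega\otimes B\omega\bigr)\,d\omega .
\]
To pass to $\partial E_B$, I use the radial bijection $\theta=\omega/\sqrt{a(\omega)}\in\partial E_B$, so that $|\theta|=a^{-1/2}$. The outward normal is $\nu=B\theta/|B\theta|$, and $\langle\theta,B\theta\rangle=1$. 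The radial-projection Jacobian is therefore
\[
d\omega=\frac{\langle\theta,\nu\rangle}{|\theta|^{n}}\,dS=\frac{dS}{|\theta|^n|B\theta|}.
\]
Moreover $a^s=|\theta|^{-2s}$ and $a^{s-1}B\omega\otimes B\omega=|\theta|^{-2s}B\theta\otimes B\theta$. Both integrands thus become integrals against $\mu_{B,s}$, which yields exactly \eqref{eq:cB} and \eqref{eq:AB}, with $\kappa_{n,s}=c_{n,s}C_s/2$ up to the normalization of $C_s$.

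The main obstacle is Step 2: the one-dimensional identity, including the precise statement that its value is $C_s(1-(1+2s)t^2)$ with $C_s>0$. I would cite it from Dyda, or derive it by writing $(1-t^2)_+^{1+s}$ via the Fourier transform as a Bessel function and using the Weber--Schafheitlin integral. A secondary technical point is justifying the use of Fubini and the principal value in Step 1. This works because the symmetrized integrand is $O(|t|^{1-2s})$ near $0$, uniformly in $\omega$, as $\psi_B$ is $C^2$ near $x$, and is bounded by $|t|^{-1-2s}$ at infinity.
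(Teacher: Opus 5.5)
Your argument is correct, and it takes a different route from the paper. The paper does not derive the formula. It quotes \cite[Corollary~3.5]{AJS} with $j=1$ for diagonal $B$, rewrites $(x\cdot B\theta)^2=\langle((B\theta)\otimes(B\theta))x,x\rangle$, and passes to general $B$ by orthogonal invariance: if $B=O^TDO$, then $\psi_B=\psi_D\circ O$ and $\mu_{B,s}$ is the push-forward of $\mu_{D,s}$ under $O^T$.

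You instead give a self-contained derivation, and each step checks out:
\begin{itemize}
\item The line decomposition, including the factor $\frac{c_{n,s}}{2}$, is correct.
\item The completed square gives $R^2-m^2=(1-\langle Bx,x\rangle)/a>0$, so $t=0$ lies inside the support.
\item The one-dimensional identity is Dyda's formula with exponent $1+s$. There the hypergeometric factor truncates to $1-(1+2s)t^2$, with a positive constant.
\item The radial-projection Jacobian $d\omega=dS/(|\theta|^n|B\theta|)$, together with $a^s=|\theta|^{-2s}$ and $a^{s-1}B\omega\otimes B\omega=|\theta|^{-2s}B\theta\otimes B\theta$, turns both integrals into integrals against $\mu_{B,s}$.
\end{itemize}

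Your route has three advantages. It handles non-diagonal $B$ directly, with no rotation step. It shows where the weight $|\theta|^{-n-2s}|B\theta|^{-1}$ in $\mu_{B,s}$ comes from. It also makes $\kappa_{n,s}$ explicit: $\kappa_{n,s}=c_{n,s}C_s/(2c_{1,s})$ if $C_s$ is the constant for the normalized one-dimensional operator, which is the bookkeeping your ``up to normalization'' refers to.

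The price is that you must supply the one-dimensional formula and justify Fubini and the principal value. You do the latter correctly via the $O(|t|^{1-2s})$ bound on the symmetrized integrand near $0$ and the $|t|^{-1-2s}$ bound at infinity. The paper instead delegates all of this to \cite{AJS}, whose corollary also covers the higher powers $j\ge1$.
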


For diagonal \(B\), the case \(j=1\) of \cite[Corollary~3.5]{AJS} gives
\[
(-\Delta)^s\psi_B(x)
=
\kappa_{n,s}
\int_{\partial E_B}
\left(
1-\langle Bx,x\rangle
-
2s(x\cdot B\theta)^2
\right)
\mu_{B,s}(d\theta).
\]
Since
\[
(x\cdot B\theta)^2
=
\langle((B\theta)\otimes(B\theta))x,x\rangle,
\]
this is exactly \eqref{eq:cB}--\eqref{eq:AB}.
The general case follows by the orthogonal invariance of the fractional Laplacian.
Notice that \(c_B>0\) and \(A_B\in\mathcal S^n_+\).

We define the coefficient map
\[
F:\mathcal S^n_+\longrightarrow\mathcal S^n_+,
\qquad
F(B):=\frac{A_B}{c_B}.
\]
We shall prove in the following sections that \(F\) is a bijection.

\section{Surjectivity of the coefficient map}\label{sec:surjectivity}

Our aim in this section is to prove the surjectivity of \(F\).
We start with its continuity.

\begin{lemma}[Continuity]\label{lem:continuity}
The coefficient map
\[
F:\mathcal S^n_+\longrightarrow\mathcal S^n_+
\]
is continuous.
\end{lemma}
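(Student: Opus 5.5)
The strategy is to parametrize the boundary \(\partial E_B\) by the unit sphere. This turns the integrals defining \(c_B\) and \(A_B\) in Proposition~\ref{prop:AJS} into integrals over the fixed domain \(\mathbb S^{n-1}\), with integrands depending continuously on \(B\); continuity then follows from dominated convergence. Since \(c_B>0\), continuity of \(F=A_B/c_B\) reduces to continuity of the two maps \(B\mapsto\int_{\partial E_B}\mu_{B,s}\) and \(B\mapsto\int_{\partial E_B}(B\theta)\otimes(B\theta)\,\mu_{B,s}(d\theta)\), together with continuity of \(B\mapsto B\).

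For the parametrization, fix \(B\in\mathcal S^n_+\) and consider the map \(\Phi_B:\mathbb S^{n-1}\to\partial E_B\), \(\Phi_B(\omega)=\langle B\omega,\omega\rangle^{-1/2}\omega\). It is the radial projection, and it is a diffeomorphism because \(E_B\) is star-shaped with respect to the origin and has a smooth boundary. The surface measure pulls back as
\[
dS(\theta)=\frac{r(\omega)^{n}}{\langle\nu(\theta),\omega\rangle}\,d\sigma(\omega),
\qquad r(\omega)=\langle B\omega,\omega\rangle^{-1/2},
\]
where \(\nu=B\theta/|B\theta|\) is the unit outer normal. Here I use the standard formula for star-shaped surfaces, \(dS=r^{n-1}\,d\sigma/\cos\alpha\), where \(\alpha\) is the angle between \(\omega\) and \(\nu\); I will check the power of \(r\) carefully. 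A cleaner route avoids that check: use \(\theta=B^{-1/2}\eta\) with \(\eta\in\mathbb S^{n-1}\). Then \(\partial E_B=B^{-1/2}\mathbb S^{n-1}\), and the surface element is
\[
dS(\theta)=\det(B^{-1/2})\,|B^{1/2}\eta|\,d\sigma(\eta),
\]
since for a linear map \(L\), \(dS(L\eta)=|\det L|\,|L^{-T}\eta|\,d\sigma(\eta)\).

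Substituting, I get
\[
\mu_{B,s}(d\theta)=\frac{\det(B)^{-1/2}\,|B^{1/2}\eta|}{|B^{-1/2}\eta|^{n+2s}\,|B^{1/2}\eta|}\,d\sigma(\eta)
=\frac{\det(B)^{-1/2}}{|B^{-1/2}\eta|^{n+2s}}\,d\sigma(\eta),
\]
and also \(B\theta=B^{1/2}\eta\). Therefore
\[
c_B=\kappa_{n,s}\det(B)^{-1/2}\int_{\mathbb S^{n-1}}|B^{-1/2}\eta|^{-n-2s}\,d\sigma(\eta),
\]
and the second integral in \eqref{eq:AB} becomes
\[
\det(B)^{-1/2}\int_{\mathbb S^{n-1}}(B^{1/2}\eta)\otimes(B^{1/2}\eta)\,|B^{-1/2}\eta|^{-n-2s}\,d\sigma(\eta).
\]

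Now I use that \(B\mapsto B^{1/2}\) and \(B\mapsto B^{-1/2}\) are continuous on \(\mathcal S^n_+\), for instance via the continuous functional calculus or the holomorphic functional calculus. Let \(B_k\to B\). On a compact neighborhood of \(B\) in \(\mathcal S^n_+\), the eigenvalues lie in some interval \([\lambda,\Lambda]\) with \(\lambda>0\). This gives the uniform bounds \(|B^{-1/2}\eta|\ge\Lambda^{-1/2}\) and \(|B^{1/2}\eta|\le\Lambda^{1/2}\) for \(\eta\in\mathbb S^{n-1}\). Hence the integrands are continuous in \((B,\eta)\) and uniformly bounded, and dominated convergence (or uniform convergence on the compact sphere) gives continuity of both integrals.

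The determinant factor is also continuous. Since \(c_B>0\), the quotient \(F(B)=A_B/c_B\) is continuous. The only delicate point is the correct surface-measure change of variables; the linear-image formula settles it. Note that the determinant factor in fact cancels in \(F\).
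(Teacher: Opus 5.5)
Your proof is correct and follows the paper's argument. You use the same parametrization \(\theta=B^{-1/2}\eta\) and obtain the same density \((\det B)^{-1/2}|B^{-1/2}\eta|^{-n-2s}\,d\sigma(\eta)\), which you justify more explicitly than the paper via the linear-image area formula, and you conclude with dominated convergence under bounds uniform on compact subsets of \(\mathcal S^n_+\). The only slip is in the abandoned radial-projection aside, where the first display should have \(r(\omega)^{n-1}\) rather than \(r(\omega)^{n}\), but it plays no role in your argument.
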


\begin{proof}
Parametrize \(\partial E_B\) by
\[
\theta=B^{-1/2}\omega,
\qquad
\omega\in\mathbb S^{n-1}.
\]
Under this parametrization,
\[
\mu_{B,s}(d\theta)
=
(\det B)^{-1/2}
|B^{-1/2}\omega|^{-n-2s}\,dS(\omega).
\]
If \(B\) ranges in a compact subset of \(\mathcal S^n_+\), then the integrands obtained from
\eqref{eq:cB} and \eqref{eq:AB} under this parametrization are uniformly bounded on
\(\mathbb S^{n-1}\).
Therefore the dominated convergence theorem shows that these integrals depend continuously on \(B\),
and hence so does \(F(B)\).
\end{proof}

We use the following form of the Poincar\'e--Miranda theorem; see \cite{Kulpa}.

\begin{proposition}[Poincar\'e--Miranda]\label{prop:PM}
Let
\[
Q=\prod_{i=1}^n[\alpha_i,\beta_i],
\qquad
\alpha_i<\beta_i
\quad(i=1,\ldots,n),
\]
and let \(H=(H_1,\ldots,H_n):Q\to\mathbb R^n\) be continuous.
Suppose that, for each \(i\),
\[
H_i\le0
\quad\text{on }\{x\in Q:x_i=\alpha_i\},
\qquad
H_i\ge0
\quad\text{on }\{x\in Q:x_i=\beta_i\}.
\]
Then there exists \(x\in Q\) such that \(H(x)=0\).
\end{proposition}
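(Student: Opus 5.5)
We intend to derive Proposition~\ref{prop:PM} directly from the Brouwer fixed point theorem, using a coordinatewise projection onto the box \(Q\). For each \(i\) let
\[
\pi_i:\mathbb R\to[\alpha_i,\beta_i],\qquad \pi_i(t):=\min\{\max\{t,\alpha_i\},\beta_i\},
\]
and let \(\pi(y):=(\pi_1(y_1),\ldots,\pi_n(y_n))\). Each \(\pi_i\) is continuous, being a composition of \(\max\) and \(\min\). Hence \(\pi:\mathbb R^n\to Q\) is continuous.

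We then define
\[
\Phi:Q\to Q,\qquad \Phi(x):=\pi\bigl(x-H(x)\bigr).
\]
This map is continuous because \(H\) is. The set \(Q\) is a nonempty compact convex subset of \(\mathbb R^n\), hence homeomorphic to a closed ball, so Brouwer's theorem gives a point \(x\in Q\) with \(\Phi(x)=x\). That is, for every \(i\),
\[
x_i=\pi_i\bigl(x_i-H_i(x)\bigr).
\]

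The last step is to read off \(H_i(x)=0\) coordinate by coordinate, splitting into three cases.
\begin{enumerate}[label=(\roman*)]
\item If \(\alpha_i<x_i<\beta_i\), then \(x_i\) is not a clamped value. So \(\pi_i(x_i-H_i(x))=x_i-H_i(x)\), and hence \(H_i(x)=0\).
\item If \(x_i=\beta_i\), then \(\pi_i(x_i-H_i(x))=\beta_i\) forces \(x_i-H_i(x)\ge\beta_i\), i.e.\ \(H_i(x)\le0\). Since \(x\) lies on the face \(\{x_i=\beta_i\}\), the hypothesis gives \(H_i(x)\ge0\). Therefore \(H_i(x)=0\).
\item If \(x_i=\alpha_i\), then symmetrically \(x_i-H_i(x)\le\alpha_i\), so \(H_i(x)\ge0\). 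The hypothesis on the face \(\{x_i=\alpha_i\}\) gives \(H_i(x)\le0\), hence \(H_i(x)=0\).
\end{enumerate}
Thus \(H(x)=0\).

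There is no real obstacle in this argument. The only points needing care are the continuity of the clamp \(\pi\) and the use of Brouwer's theorem on a box rather than a ball. The latter is justified by the homeomorphism of a compact convex set with nonempty interior onto a closed ball, for instance via the Minkowski gauge, or simply by an affine change of variables to \([-1,1]^n\) followed by the radial homeomorphism onto the unit ball.
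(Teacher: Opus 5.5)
Your argument is correct. The clamp $\pi$ is continuous and $\Phi=\pi\circ(\mathrm{id}-H)$ maps $Q$ into itself. Brouwer applies on the box, and the three-case analysis at a fixed point is sound: because $\alpha_i<\beta_i$, $\pi_i(t)=\beta_i$ forces $t\ge\beta_i$, and $\pi_i(t)=\alpha_i$ forces $t\le\alpha_i$. That is exactly what pairs with the face sign conditions.

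The paper gives no proof of this proposition. It quotes it from Kulpa's note and uses it as a black box in Lemma~\ref{lem:surj}, so the comparison is between citing the result and deriving it. Your route makes the statement self-contained modulo Brouwer's fixed point theorem, via the standard reduction $x\mapsto\pi(x-H(x))$. You cannot do better than some input of that strength, since Poincar\'e--Miranda is equivalent to Brouwer. The citation buys brevity. Your version buys transparency: it shows that only the weak inequalities on the faces are used. The strict inequality $H_i>0$ that the paper obtains on the upper faces in Lemma~\ref{lem:surj} is therefore more than the argument requires.
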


\begin{lemma}[Surjectivity]\label{lem:surj}
The coefficient map
\[
F:\mathcal S^n_+\longrightarrow\mathcal S^n_+
\]
is surjective.
\end{lemma}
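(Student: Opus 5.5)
The plan is to reduce to diagonal matrices by orthogonal equivariance, and then solve the resulting \(n\) scalar equations with Proposition~\ref{prop:PM}, using two-sided bounds on the diagonal entries of \(F\).

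\textbf{Step 1: equivariance.} For an orthogonal matrix \(Q\) I first check that
\[
F(QBQ^T)=QF(B)Q^T.
\]
This follows from the rotation invariance of \((-\Delta)^s\). Indeed, \(\psi_{QBQ^T}(x)=\psi_B(Q^Tx)\), so
\[
(-\Delta)^s\psi_{QBQ^T}(x)=c_B-\langle QA_BQ^Tx,x\rangle
\quad\text{in }E_{QBQ^T}.
\]
Since a quadratic polynomial on an open set determines its coefficients, \(c_{QBQ^T}=c_B\) and \(A_{QBQ^T}=QA_BQ^T\). Given a target \(A\in\mathcal S^n_+\), I write \(A=Q\,\mathrm{diag}(a_1,\dots,a_n)Q^T\) with \(a_i>0\). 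It then suffices to find a diagonal \(B=\mathrm{diag}(b_1,\dots,b_n)\), \(b_i>0\), with \(F(B)=\mathrm{diag}(a_1,\dots,a_n)\).

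\textbf{Step 2: \(F\) preserves diagonal matrices.} For diagonal \(B\), the surface \(\partial E_B\) and the measure \(\mu_{B,s}\) are invariant under each reflection \(\theta_j\mapsto-\theta_j\). For \(i\neq j\), the integrand \((B\theta)_i(B\theta)_j=b_ib_j\theta_i\theta_j\) is odd under that reflection, so it integrates to zero. Hence \(A_B\) is diagonal. Writing \(F(B)=\mathrm{diag}(F_1(b),\dots,F_n(b))\), I get from \eqref{eq:cB}--\eqref{eq:AB}
\[
F_i(b)=b_i+2s\,\frac{\int_{\partial E_B}b_i^2\theta_i^2\,\mu_{B,s}(d\theta)}{\int_{\partial E_B}\mu_{B,s}(d\theta)}.
\]
Each \(F_i\) is continuous in \(b\) by Lemma~\ref{lem:continuity}.

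\textbf{Step 3: two-sided bounds.} The second term above is nonnegative, so \(F_i(b)\ge b_i\). On \(\partial E_B\) we have \(\sum_j b_j\theta_j^2=1\), hence \(b_i\theta_i^2\le1\) and so \(b_i^2\theta_i^2\le b_i\). Since the remaining integral is a \(\mu_{B,s}\)-average, this gives
\[
b_i\le F_i(b)\le(1+2s)\,b_i .
\]

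\textbf{Step 4: Poincar\'e--Miranda.} I take the box
\[
Q=\prod_{i=1}^n\Bigl[\tfrac{a_i}{1+2s},\,a_i\Bigr],
\]
which is nondegenerate because \(s>0\), and set \(H_i(b)=F_i(b)-a_i\). On the face \(b_i=a_i/(1+2s)\), Step 3 gives \(H_i\le(1+2s)b_i-a_i=0\). On the face \(b_i=a_i\), it gives \(H_i\ge b_i-a_i=0\). Proposition~\ref{prop:PM} then yields \(b\in Q\) with \(F(\mathrm{diag}(b))=\mathrm{diag}(a)\). Conjugating back by \(Q\) via Step 1 gives \(B\in\mathcal S^n_+\) with \(F(B)=A\).

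The step I expect to need the most care is Step 2: the vanishing of the off-diagonal entries and the clean expression for \(F_i\). Once that is in place, the bound in Step 3 is immediate and makes the boundary conditions for Poincar\'e--Miranda automatic.
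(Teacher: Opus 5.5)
Your proposal is correct and follows essentially the same route as the paper. Both arguments reduce to a diagonal target by orthogonal equivariance, use reflection symmetry to make $A_B$ diagonal, bound the diagonal entries by $b_i\le F_i(B)\le(1+2s)b_i$, and apply Poincar\'e--Miranda on the box $\prod_i[a_i/(1+2s),a_i]$. The differences are cosmetic: the paper gets the upper bound from $\rho_i\le 1$ (via $\sum_i\rho_i=1$), which is equivalent to your $b_i\theta_i^2\le1$, and it reads equivariance directly off \eqref{eq:cB}--\eqref{eq:AB} rather than from rotation invariance of $(-\Delta)^s$ plus coefficient matching.
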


\begin{proof}
We first note that \(F\) is orthogonally equivariant.
Indeed, \eqref{eq:cB} and \eqref{eq:AB} give
\[
F(O^TBO)=O^TF(B)O
\]
for every orthogonal matrix \(O\).
Hence it is enough to consider a diagonal target
\[
M=\operatorname{diag}(m_1,\ldots,m_n),
\qquad
m_i>0.
\]

Let
\[
B=\operatorname{diag}(b_1,\ldots,b_n),
\qquad
b_i>0.
\]
By reflection symmetry, \(A_B\) is diagonal.
Define
\[
J_0(B)
:=
\int_{\partial E_B}\mu_{B,s}(d\theta),
\qquad
J_1^{(i)}(B)
:=
b_i
\int_{\partial E_B}\theta_i^2\,\mu_{B,s}(d\theta),
\]
and
\[
\rho_i(B)
:=
\frac{J_1^{(i)}(B)}{J_0(B)}.
\]
Since
\[
\sum_{i=1}^n b_i\theta_i^2=1
\qquad\text{on }\partial E_B,
\]
we have
\[
\rho_i(B)>0,
\qquad
\sum_{i=1}^n\rho_i(B)=1.
\]
It follows from \eqref{eq:cB}--\eqref{eq:AB} that
\begin{equation}\label{eq:Fdiag}
F(B)
=
\operatorname{diag}
\bigl(
b_1(1+2s\rho_1(B)),\ldots,
b_n(1+2s\rho_n(B))
\bigr).
\end{equation}
In particular, if \(F_i(B)\) denotes the \(i\)-th diagonal entry of \(F(B)\), then
\[
b_i<F_i(B)\le(1+2s)b_i,
\qquad
i=1,\ldots,n.
\]

Identify a positive diagonal matrix \(B\) with
\(b=(b_1,\ldots,b_n)\in(0,\infty)^n\), and consider the box
\[
Q_M
=
\prod_{i=1}^n
\left[
\frac{m_i}{1+2s},
m_i
\right].
\]
Define
\[
H_i(b):=F_i(B)-m_i.
\]
On the lower \(i\)-face, \eqref{eq:Fdiag} gives
\[
F_i(B)\le(1+2s)b_i=m_i,
\]
so \(H_i\le0\).
On the upper \(i\)-face,
\[
F_i(B)>b_i=m_i,
\]
so \(H_i>0\).
Lemma~\ref{lem:continuity} and Proposition~\ref{prop:PM} yield a point
\(b\in Q_M\) for which \(F(B)=M\).
This proves the surjectivity of \(F\).
\end{proof}

\section{Construction and uniqueness of the solution}\label{sec:construction}

We first prove the uniqueness of a decaying viscosity solution of \eqref{eq:obstacle}.

\begin{lemma}[Uniqueness]\label{lem:uniqueness}
The decaying viscosity solution of \eqref{eq:obstacle} is unique.
\end{lemma}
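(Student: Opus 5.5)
Uniqueness will follow from a comparison principle for viscosity sub/supersolutions of \eqref{eq:obstacle} on all of \(\mathbb R^n\), where decay at infinity replaces a boundary condition. Let \(u,v\in C_b(\mathbb R^n)\) be two decaying viscosity solutions. By symmetry it suffices to show \(u\le v\). Suppose instead that \(\sup(u-v)=:\delta>0\). Since \(u-v\to0\) at infinity and is continuous, the supremum is attained on a compact set.

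First we remove the degenerate branch. At any point \(x_0\) where \(u(x_0)-v(x_0)\) is near \(\delta\), we have \(u(x_0)\ge v(x_0)+\delta/2>0\), because \(v\ge0\) by the supersolution definition. So in a neighbourhood of the maximum set the subsolution alternative "\(u(x_0)\le0\)" is unavailable. There \(u\) is a genuine viscosity subsolution of \((-\Delta)^s u\le f\), while \(v\) is a viscosity supersolution of \((-\Delta)^s v\ge f\).

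Next we run the standard doubling-of-variables argument for nonlocal equations, as in Barles--Imbert / Caffarelli--Silvestre. Set
\[
\Phi_\varepsilon(x,y)=u(x)-v(y)-\frac{|x-y|^2}{2\varepsilon}.
\]
Decay and boundedness give maximizers \((x_\varepsilon,y_\varepsilon)\) in a fixed compact set. Along a subsequence, \(x_\varepsilon,y_\varepsilon\to\bar x\) with \(u(\bar x)-v(\bar x)=\delta\) and \(|x_\varepsilon-y_\varepsilon|^2/\varepsilon\to0\). We test \(u\) at \(x_\varepsilon\) and \(v\) at \(y_\varepsilon\) with the quadratic test functions inside \(B_r\), using \(u\) and \(v\) themselves outside \(B_r\), per Definition~\ref{def:viscosity}. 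Subtracting the two inequalities gives
\[
\mathcal L_r[\phi_1,u](x_\varepsilon)-\mathcal L_r[\phi_2,v](y_\varepsilon)\le f(x_\varepsilon)-f(y_\varepsilon)\to0 .
\]
In the left side, the local parts are \(O(r^{2-2s}/\varepsilon)\), which is harmless after letting \(r\to0\) first, with \(\varepsilon\) fixed. The tail part is
\[
c_{n,s}\int_{|z|\ge r}\frac{[u(x_\varepsilon)-v(y_\varepsilon)]-[u(x_\varepsilon+z)-v(y_\varepsilon+z)]}{|z|^{n+2s}}\,dz .
\]
Its integrand is \(\ge0\) by maximality of \(\Phi_\varepsilon\) along the diagonal shift \((x_\varepsilon+z,y_\varepsilon+z)\). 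In the limit \(\varepsilon\to0\), Fatou's lemma bounds it below by
\[
c_{n,s}\int\frac{\delta-(u-v)(\bar x+z)}{|z|^{n+2s}}\,dz .
\]
This integral must therefore vanish, so \(u-v\equiv\delta\) on \(\mathbb R^n\). That contradicts decay, since \(\delta>0\). Hence \(u\le v\), and by symmetry \(u=v\).

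The main obstacle is making the doubling argument rigorous within Definition~\ref{def:viscosity}. The test functions are only \(C^2\) on \(B_r\) with \(u\) used outside, so we must justify that the penalized maximum produces admissible touching functions. We must also handle the order of limits \(r\to0\) then \(\varepsilon\to0\) while keeping \(u(x_\varepsilon)>0\), which holds for \(\varepsilon\) small by the argument above. A quicker alternative we would try first: since \(f\) is smooth, solutions are \(C^{1,s}\) and semiconvex-type regularity may let us evaluate pointwise. The doubling route is robust, however, and needs only the continuity of \(f\).
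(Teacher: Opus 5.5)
Your argument is correct, but it takes a different route from the paper. Both proofs start from the same observation: since \(v\ge0\), wherever \(u\) exceeds \(v\) one has \(u>0\), so the obstacle alternative in the subsolution definition is inactive there.

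The paper then localizes. It takes the bounded open set \(\Omega_\varepsilon=\{u>v+\varepsilon\}\), which is bounded by decay. There \(u\) and \(v+\varepsilon\) are a sub- and a supersolution of the linear equation \((-\Delta)^s w=f\), with \(u\le v+\varepsilon\) outside. The paper then invokes the general comparison principle \cite[Theorem~5.2]{CS} to reach a contradiction.

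You instead prove the comparison directly on \(\mathbb R^n\) by doubling variables, using linearity and translation invariance of \((-\Delta)^s\). The tail integrand is nonnegative by maximality of \(\Phi_\varepsilon\) along diagonal shifts. Letting \(r\to0\) (monotone convergence) and then \(\varepsilon\to0\) (Fatou) gives
\[
\int_{\mathbb R^n}\frac{\delta-(u-v)(\bar x+z)}{|z|^{n+2s}}\,dz\le0 .
\]
This is a nonlocal strong maximum principle: it forces \(u-v\equiv\delta\), contradicting decay.

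The paper's version is shorter and hands the analysis to a comparison theorem valid for a broad class of nonlocal operators. Yours is self-contained and needs only continuity of \(f\). However, its last step uses that the kernel is positive everywhere, so it would not carry over verbatim to local operators.

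The difficulties you flag at the end are not real. The functions
\[
u(x_\varepsilon)+\frac{|x-y_\varepsilon|^2-|x_\varepsilon-y_\varepsilon|^2}{2\varepsilon},
\qquad
v(y_\varepsilon)-\frac{|x_\varepsilon-y|^2-|x_\varepsilon-y_\varepsilon|^2}{2\varepsilon}
\]
touch \(u\) from above and \(v\) from below on all of \(\mathbb R^n\). Hence they are admissible in Definition~\ref{def:viscosity} for every \(r>0\). Also, \(\Phi_\varepsilon(x_\varepsilon,y_\varepsilon)\ge\Phi_\varepsilon(\bar x,\bar x)=\delta\) gives \(u(x_\varepsilon)\ge\delta+v(y_\varepsilon)\ge\delta>0\) for every \(\varepsilon\), not only for small ones. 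Neither the order of limits nor any regularity of \(u\) requires further justification.
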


\begin{proof}
Let \(u\) and \(v\) be two decaying viscosity solutions.
They are bounded since they are continuous and decay at infinity.
Suppose that \(u>v\) somewhere.
For \(\varepsilon>0\) sufficiently small, the set
\[
\Omega_\varepsilon
:=
\{u>v+\varepsilon\}
\]
is a nonempty bounded open set.
Since \(v\ge0\), one has \(u>0\) in \(\Omega_\varepsilon\).
Hence, with \(I:=-( -\Delta)^s\),
\[
Iu\ge-f
\qquad\text{in }\Omega_\varepsilon
\]
in the viscosity sense, while the supersolution property of \(v\) gives
\[
I(v+\varepsilon)=Iv\le-f
\qquad\text{in }\Omega_\varepsilon.
\]
Moreover,
\[
u\le v+\varepsilon
\qquad\text{in }\mathbb R^n\setminus\Omega_\varepsilon.
\]
The operator \(I=-(-\Delta)^s\) satisfies the hypotheses of the comparison principle
in \cite[Theorem~5.2]{CS}.
Hence \(u\le v+\varepsilon\) in \(\Omega_\varepsilon\), a contradiction.
Thus \(u\le v\).
Interchanging \(u\) and \(v\) gives \(u=v\).
\end{proof}

\begin{proof}[Proof of Theorem~\ref{thm:main}]
By Lemma~\ref{lem:surj}, there exists \(B\in\mathcal S^n_+\) such that
\[
F(B)=\frac{A}{c}.
\]
Set
\[
K:=\frac{c}{c_B}>0,
\qquad
u_B(x):=K\psi_B(x).
\]
Then Proposition~\ref{prop:AJS} gives
\begin{equation}\label{eq:inside}
(-\Delta)^s u_B(x)
=
c-\langle Ax,x\rangle
=
f(x)
\qquad
\text{for }x\in E_B.
\end{equation}

The profile \(\psi_B\) belongs to \(C^{1,s}(\mathbb R^n)\).
By \cite[Proposition~2.6]{Silvestre},
\[
(-\Delta)^s\psi_B\in C^{0,1-s}(\mathbb R^n).
\]
Hence \eqref{eq:inside} extends continuously to \(\partial E_B\).

It remains to verify the obstacle inequality outside \(E_B\).
Diagonalizing \(B\) and using reflection symmetry in \eqref{eq:AB},
we see that \(A_B\) and \(B\) commute.
Since \(F(B)=A/c\), the matrices \(A\) and \(B\) are therefore simultaneously diagonalizable.
Let \(q\in\partial E_B\) and let
\[
\nu(q):=\frac{Bq}{|Bq|}
\]
be the outer unit normal to \(E_B\) at \(q\).
Put
\[
x_t=q+t\nu(q),
\qquad
t\ge0.
\]
Since \(A\) and \(B\) are simultaneously diagonalizable and positive definite,
\[
\frac{d}{dt}f(x_t)
=
-2\langle A(q+t\nu),\nu\rangle
=
-2\frac{\langle Aq,Bq\rangle}{|Bq|}
-2t\langle A\nu,\nu\rangle
<0.
\]
Thus
\begin{equation}\label{eq:fmon}
f(x_t)\le f(q).
\end{equation}

For \(t>0\), \(x_t\notin\operatorname{supp}u_B\), and hence
\[
(-\Delta)^s u_B(x_t)
=
-c_{n,s}
\int_{E_B}
\frac{u_B(z)}{|x_t-z|^{n+2s}}\,dz.
\]
Set
\[
H(t):=-(-\Delta)^s u_B(x_t)>0.
\]
Differentiating under the integral sign gives
\[
H'(t)
=
-(n+2s)c_{n,s}
\int_{E_B}
u_B(z)
\frac{(q-z)\cdot\nu+t}
{|q+t\nu-z|^{n+2s+2}}\,dz.
\]
Since \(E_B\) is convex and \(\nu(q)\) is a supporting normal,
\[
(q-z)\cdot\nu\ge0
\qquad
\text{for every }z\in E_B.
\]
Therefore \(H'(t)\le0\), and consequently
\begin{equation}\label{eq:fracmon}
(-\Delta)^s u_B(x_t)
\ge
(-\Delta)^s u_B(q),
\qquad
t\ge0,
\end{equation}
where the case \(t=0\) follows from the continuity established above.

Every \(x\in E_B^c\) has a unique Euclidean projection \(q\in\partial E_B\),
and
\[
x=q+t\nu(q)
\]
for some \(t\ge0\).
Combining \eqref{eq:inside}, \eqref{eq:fmon}, and \eqref{eq:fracmon}, we obtain
\[
(-\Delta)^s u_B(x)
\ge
(-\Delta)^s u_B(q)
=
f(q)
\ge
f(x).
\]
Thus
\[
u_B\ge0,
\qquad
(-\Delta)^s u_B\ge f
\quad\text{in }\mathbb R^n,
\qquad
(-\Delta)^s u_B=f
\quad\text{in }\{u_B>0\}=E_B.
\]
Since \((-\Delta)^s u_B\) is continuous, these pointwise inequalities imply the viscosity inequalities
of Definition~\ref{def:viscosity}.
Hence \(u_B\) is a viscosity solution of \eqref{eq:obstacle}.
By Lemma~\ref{lem:uniqueness}, it is the unique decaying viscosity solution.
This proves the theorem.
\end{proof}

\begin{corollary}\label{cor:bijective}
The coefficient map
\[
F:\mathcal S^n_+\longrightarrow\mathcal S^n_+
\]
is bijective.
\end{corollary}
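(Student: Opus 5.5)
Surjectivity is already Lemma~\ref{lem:surj}, so the plan is to prove injectivity by combining the uniqueness of Lemma~\ref{lem:uniqueness} with the construction in the proof of Theorem~\ref{thm:main}. Suppose \(B_1,B_2\in\mathcal S^n_+\) satisfy \(F(B_1)=F(B_2)=:M\). I set \(A:=M\) and \(c:=1\). This is an admissible forcing \eqref{eq:forcing}, because \(M\in\mathcal S^n_+\). For \(j=1,2\), put \(K_j:=1/c_{B_j}\) and \(u_j:=K_j\psi_{B_j}\).

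The key observation is that the proof of Theorem~\ref{thm:main} uses only the identity \(F(B)=A/c\). It never uses how \(B\) was obtained, so it applies verbatim to any \(B\) satisfying this identity. Concretely, for each \(j\) the argument proceeds in four steps.
\begin{enumerate}
\item Proposition~\ref{prop:AJS} gives \((-\Delta)^s u_j=f\) in \(E_{B_j}\), where \(f(x)=1-\langle Mx,x\rangle\).
\item This identity extends continuously to \(\partial E_{B_j}\) by the \(C^{1,s}\) regularity of \(\psi_{B_j}\).
\item The matrices \(A_{B_j}\) and \(B_j\) commute, so \(M=F(B_j)\) commutes with \(B_j\). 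This is what makes the monotonicity of \(f\) along the outward normal rays valid.
\item The monotonicity of \((-\Delta)^s u_j\) along the same rays holds by convexity of \(E_{B_j}\).
\end{enumerate}
Together these show that \(u_j\) is a decaying viscosity solution of \eqref{eq:obstacle} with this forcing. Lemma~\ref{lem:uniqueness} then yields \(u_1=u_2\).

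It remains to read off \(B_1=B_2\) from \(u_1=u_2\). The positivity sets coincide, so \(E_{B_1}=E_{B_2}\). An ellipsoid \(\{\langle Bx,x\rangle<1\}\) determines \(B\): its gauge \(x\mapsto\langle Bx,x\rangle^{1/2}\) is the Minkowski functional of the set, so \(\langle B_1x,x\rangle=\langle B_2x,x\rangle\) for all \(x\), and polarization gives \(B_1=B_2\). As a byproduct, comparing the values at \(x=0\) gives \(K_1=K_2\), although this is not needed. Hence \(F\) is injective, and with Lemma~\ref{lem:surj} it is bijective.

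The only point requiring care is the observation that the verification outside the ellipsoid applies to an arbitrary preimage \(B\) of \(A/c\), and not just to the one produced by Lemma~\ref{lem:surj}. Checking this amounts to rereading the proof of Theorem~\ref{thm:main}, and I do not expect any real obstacle.
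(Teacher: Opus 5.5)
Your proposal is correct and follows the paper's proof essentially verbatim. You apply the construction from the proof of Theorem~\ref{thm:main} with \(c=1\) and \(A=M\) to both preimages, use Lemma~\ref{lem:uniqueness} to get \(u_1=u_2\), and recover \(B\) from \(E_B\); your gauge-plus-polarization step is just the reciprocal form of the paper's radial function \(\omega\mapsto\langle B\omega,\omega\rangle^{-1/2}\).
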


\begin{proof}
Surjectivity is Lemma~\ref{lem:surj}.
Suppose that
\[
F(B_1)=F(B_2)=M.
\]
For \(i=1,2\), set
\[
u_i:=\frac{1}{c_{B_i}}\psi_{B_i}.
\]
The proof of Theorem~\ref{thm:main} shows that both \(u_1\) and \(u_2\) are decaying viscosity solutions
of \eqref{eq:obstacle} with \(c=1\) and \(A=M\).
Lemma~\ref{lem:uniqueness} gives \(u_1=u_2\), and hence \(E_{B_1}=E_{B_2}\).
Since the radial function of \(E_B\) is
\[
\omega\longmapsto\langle B\omega,\omega\rangle^{-1/2}
\qquad
\text{on }\mathbb S^{n-1},
\]
it follows that \(B_1=B_2\).
Thus \(F\) is injective.
In particular, the parameters \(B\) and \(K=c/c_B\) in Theorem~\ref{thm:main}
are uniquely determined.
\end{proof}

\section{Cubic global solutions of the thin obstacle problem}\label{sec:thin}

We now specialize to \(s=1/2\) and prove Theorem~\ref{thm:FRY}.
Throughout this section, \(n\ge2\).

Write points in \(\mathbb R^{n+1}\) as
\[
X=(x,y)\in\mathbb R^n\times\mathbb R,
\]
and call \(\{y=0\}\) the thin space.
We call \(\{W(\cdot,0)>0\}\) the positivity set of \(W\) on the thin space.
Following \cite{FRY}, we say that \(W:\mathbb R^{n+1}\to\mathbb R\) has polynomial growth if
\[
\left\|
\frac{W(X)}{1+|X|^m}
\right\|_{L^\infty(\mathbb R^{n+1})}
<\infty
\]
for some \(m\in\mathbb N\), and call the smallest such \(m\) the order of the polynomial growth of \(W\).
In particular, \(W\) has cubic growth if this order is \(3\).
We also write
\[
\partial_y^+W(x,0)
:=
\lim_{y\downarrow0}\partial_yW(x,y)
\]
whenever the limit exists.
Following \cite{FRY}, a global solution of the thin obstacle problem satisfies
\begin{equation}\label{eq:thin}
\begin{cases}
W\ge0 & \text{on }\{y=0\},\\
\Delta W=0
& \text{in }\mathbb R^{n+1}\setminus\{y=0,\ W=0\},\\
\Delta W\le0
& \text{in }\mathbb R^{n+1},\\
W(x,y)=W(x,-y).
\end{cases}
\end{equation}
Here the inequality \(\Delta W\le0\) is understood in the distributional sense.

Let \(\mathcal P_{n+1}\) denote the space of polynomials on \(\mathbb R^{n+1}\).
Fern\'andez-Real and Yu \cite[Theorem~1.1]{FRY} introduce the class
\[
\mathcal P^o_{n+1}
:=
\left\{
q\in\mathcal P_{n+1}:
\Delta(yq)=0,\ 
q\text{ is even in }y,\ 
\{q(x,0)<0\}\text{ is compact}
\right\}
\]
and prove that a polynomial-growth global solution has bounded positivity set if and only if
\begin{equation}\label{eq:asymptotic}
|W(x,y)+|y|q(x,y)|
\longrightarrow0
\qquad
\text{as }|(x,y)|\to\infty
\end{equation}
for some \(q\in\mathcal P^o_{n+1}\).
For each \(q\), the corresponding global solution is unique.
Their proof also shows that if \(W\) has growth order \(m\), then \(q\) has degree \(m-1\);
see \cite[Section~3]{FRY}.

We first classify the possible cubic data.

\begin{lemma}[Quadratic form of the cubic datum]\label{lem:q}
Let \(W\) have cubic growth and nonempty bounded positivity set, and let
\(q\in\mathcal P^o_{n+1}\) be given by \eqref{eq:asymptotic}.
Then, after a translation in \(x\),
\begin{equation}\label{eq:qform}
q(x,y)
=
\langle Mx,x\rangle
-c
-\frac13(\operatorname{tr}M)y^2,
\end{equation}
where \(M=M^T>0\) and \(c>0\).
\end{lemma}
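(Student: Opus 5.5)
Since \(W\) has cubic growth, the FRY result recalled above gives \(\deg q = 2\). Write \(q\) as a polynomial of degree at most \(2\) that is even in \(y\):
\[
q(x,y)=\langle Mx,x\rangle+\langle b,x\rangle+d+e\,y^2 ,
\]
with \(M=M^T\), \(b\in\mathbb R^n\) and \(d,e\in\mathbb R\). Evenness in \(y\) kills every term that is odd in \(y\), namely \(y\), \(xy\) and similar. The plan is to determine \(e\) from harmonicity, and then \(M\), \(b\), \(d\) from the compactness condition, the growth order and the nonemptiness of the positivity set.

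\emph{Harmonicity.} Compute
\[
\Delta(yq)=y\,\Delta q+2\,\partial_y q=y\bigl(2\operatorname{tr}M+2e\bigr)+4ey=y\bigl(2\operatorname{tr}M+6e\bigr).
\]
Requiring this to vanish forces \(e=-\tfrac13\operatorname{tr}M\).

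\emph{The quadratic part on the thin space.} Next use that \(\{q(x,0)<0\}\) is compact, where \(q(x,0)=\langle Mx,x\rangle+\langle b,x\rangle+d\).

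First, \(M\) has no negative eigenvalue. If \(\langle Mv,v\rangle<0\) for some \(v\), then \(q(tv,0)\to-\infty\) and the set is unbounded.

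Second, \(M\) has no zero eigenvalue. This step needs care. Suppose \(Mv=0\) with \(v\ne0\) and decompose \(x=x'+tv\). If \(\langle b,v\rangle\neq0\), then \(q\) is affine in \(t\) with nonzero slope and the set is unbounded. If \(\langle b,v\rangle=0\), then \(q\) is independent of \(t\). Its negativity set is then either empty or an unbounded cylinder. Only the empty case survives compactness, and in that case \(q(x,0)\ge0\) everywhere. We then invoke Remark~\ref{rem:empty} together with the uniqueness of the global solution for each \(q\): the solution is \(W=-|y|q\), whose positivity set is empty. This contradicts nonemptiness.

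Hence \(M>0\). One should also check that \(M\neq0\), which is the case \(\deg q<2\). That case is excluded because the growth order is exactly \(3\), so \(\deg q=2\); in any event a nonzero linear \(q(x,0)\) has noncompact negativity set, and a constant one is handled by the same empty-set argument.

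\emph{Completing the square.} With \(M>0\), translate \(x\mapsto x-\tfrac12M^{-1}b\) in the thin variables. This translation commutes with \(\Delta\), preserves evenness in \(y\), and preserves the class \(\mathcal P^o_{n+1}\). It brings \(q(x,0)\) to the form \(\langle Mx,x\rangle-c\), and the \(y^2\) coefficient \(-\tfrac13\operatorname{tr}M\) is unchanged. This gives \eqref{eq:qform}.

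\emph{Sign of the constant.} It remains to show \(c>0\). If \(c\le0\), then \(q(x,0)\ge0\) for all \(x\), and the empty-positivity argument above again contradicts the assumption that the positivity set is nonempty.

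\emph{Main obstacle.} The delicate point is the repeated appeal to the empty-positivity case. We must justify that whenever \(q(\cdot,0)\ge0\), the unique global solution associated with \(q\) is exactly \(-|y|q\). This requires checking that \(-|y|q\) satisfies \eqref{eq:thin}: it vanishes on the thin space, it is harmonic off \(\{y=0\}\) because \(\Delta(yq)=0\), and \(\Delta\) of it is the measure \(-2q(x,0)\,dx\,\delta_{y=0}\le0\). It also satisfies \eqref{eq:asymptotic} trivially. Uniqueness in FRY then identifies \(W=-|y|q\).
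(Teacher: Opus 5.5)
Your proposal is correct and follows the paper's proof essentially step for step. You use the same harmonicity computation for the $y^2$ coefficient, the same eigenvalue case analysis showing $M>0$ from boundedness of $\{q(\cdot,0)<0\}$, and the same verification that $-|y|q$ is a global solution, which combined with FRY uniqueness rules out $q(\cdot,0)\ge0$. The only difference is ordering: the paper proves nonemptiness of $\{q(\cdot,0)<0\}$ once at the start and then derives both $M>0$ and $c>0$ from it, whereas you invoke the empty-positivity argument separately at each point where it is needed.
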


\begin{proof}
Since \(W\) has cubic growth, \(q\) is quadratic.
Since \(q\) is even in \(y\), it has the form
\[
q(x,y)
=
\langle Mx,x\rangle
+\ell\cdot x
+d
+\gamma y^2,
\qquad
M=M^T.
\]
The condition \(\Delta(yq)=0\) gives
\[
2(\operatorname{tr}M)y+6\gamma y=0,
\]
hence
\[
\gamma=-\frac{\operatorname{tr}M}{3}.
\]

The set \(\{q(x,0)<0\}\) is bounded.
It is also nonempty.
Indeed, if \(q(x,0)\ge0\) for every \(x\), then
\[
W_0(x,y):=-|y|q(x,y)
\]
is a global solution of the thin obstacle problem satisfying \eqref{eq:asymptotic}:
it is harmonic away from the thin space, vanishes on \(\{y=0\}\), and
\[
\Delta W_0
=
-2q(x,0)\,\delta_{\{y=0\}}
\le0
\]
in the distributional sense.
By the uniqueness in \cite[Theorem~1.1]{FRY}, \(W=W_0\),
contradicting the assumption that the positivity set of \(W\) is nonempty.

Boundedness and nonemptiness of the negative set force \(M>0\):
a negative direction of \(M\) would make the negative set unbounded,
while a null direction either produces an unbounded negative half-line through the linear term
or leaves a negative value invariant along an entire line.

Completing the square with
\[
x_0=-\frac12M^{-1}\ell
\]
gives
\[
q(x,y)
=
\langle M(x-x_0),x-x_0\rangle
-c
-\frac13(\operatorname{tr}M)y^2
\]
with
\[
c=\frac14\ell^TM^{-1}\ell-d>0.
\]
Translating \(x_0\) to the origin yields \eqref{eq:qform}.
\end{proof}

The equivalence between the Signorini problem and the obstacle problem for the half-Laplacian
is standard; see, for instance, \cite{CSS} and \cite[Section~1.2]{Silvestre}.
We apply this correspondence after subtracting the polynomial asymptotic in \eqref{eq:asymptotic}.

\begin{proposition}[Reduction to a decaying half-Laplacian obstacle problem]\label{prop:equiv}
Let \(q\) be as in \eqref{eq:qform}.
A global solution \(W\) satisfying \eqref{eq:asymptotic} corresponds bijectively to a decaying
viscosity solution \(g\) of
\begin{equation}\label{eq:half}
\min\left\{
g,\,
(-\Delta)^{1/2}g-\bigl(c-\langle Mx,x\rangle\bigr)
\right\}
=
0
\qquad
\text{in }\mathbb R^n.
\end{equation}
Moreover,
\[
\{W(\cdot,0)>0\}=\{g>0\}.
\]
\end{proposition}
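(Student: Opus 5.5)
The plan is to subtract the polynomial asymptotic and use the harmonic extension. Given \(W\) satisfying \eqref{eq:asymptotic} with \(q\) as in \eqref{eq:qform}, set
\[
V(x,y):=W(x,y)+|y|q(x,y).
\]
Since \(\Delta(yq)=0\) and \(q\) is even in \(y\), the function \(|y|q\) is harmonic in \(\{y\neq0\}\), vanishes on \(\{y=0\}\), and has distributional Laplacian \(2q(x,0)\,\delta_{\{y=0\}}\). Hence \(V\) is even in \(y\), harmonic off the thin space, and \(V(x,0)=W(x,0)\). By \eqref{eq:asymptotic}, \(V\to0\) at infinity. Set \(g:=V(\cdot,0)=W(\cdot,0)\); the identity \(\{W(\cdot,0)>0\}=\{g>0\}\) is then immediate.

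The key step is to identify \(V\) with the Poisson (harmonic) extension of \(g\). On each half-space, \(V\) is harmonic, bounded (it is continuous and decays), and has boundary trace \(g\). By the maximum principle or Liouville for bounded harmonic functions in a half-space with zero trace, \(V\) equals the Poisson extension of \(g\). For \(s=1/2\) this gives
\[
-\partial_y^+V(x,0)=(-\Delta)^{1/2}g(x),
\]
and therefore
\[
-\partial_y^+W(x,0)=(-\Delta)^{1/2}g(x)-q(x,0)=(-\Delta)^{1/2}g-\bigl(\langle Mx,x\rangle-c\bigr)\cdot(-1)\cdot(-1).
\]
Concretely, since \(\partial_y^+(|y|q)(x,0)=q(x,0)\), we get
\[
-2\,\partial_y^+W(x,0)\,\delta \;\leftrightarrow\; (-\Delta)^{1/2}g-(c-\langle Mx,x\rangle).
\]
The distributional measure \(\Delta W\) on the thin space is \(2\partial_y^+W(x,0)\), so this computation translates the Signorini conditions in \eqref{eq:thin} as follows:
\[
\Delta W\le0 \iff (-\Delta)^{1/2}g\ge c-\langle Mx,x\rangle,
\qquad
\Delta W=0 \text{ off }\{W=0\} \iff \text{equality on }\{g>0\},
\qquad
W\ge0 \iff g\ge0.
\]
This is exactly \eqref{eq:half}.

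For the converse, given a decaying viscosity solution \(g\), I will let \(V\) be its Poisson extension and define \(W:=V-|y|q\). Reversing the computation above shows that \(W\) solves \eqref{eq:thin} and satisfies \eqref{eq:asymptotic}, since \(V\to0\). The two maps \(W\mapsto g\) and \(g\mapsto W\) are mutually inverse by the Poisson-extension uniqueness already used. Bijectivity is also consistent with the uniqueness statements in \cite[Theorem~1.1]{FRY} and Lemma~\ref{lem:uniqueness}.

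The main obstacle is regularity and the solution notion: passing between the distributional inequality \(\Delta W\le0\) and viscosity inequalities for \(g\). The plan is to use the known optimal \(C^{1,1/2}\) regularity of thin obstacle solutions, from \cite{CSS} and \cite{Silvestre}. This makes \(\partial_y^+W(\cdot,0)\) a continuous function, so \((-\Delta)^{1/2}g\) is continuous and the pointwise inequalities hold. Pointwise inequalities with a continuous \((-\Delta)^{1/2}g\) give the viscosity inequalities, as in the proof of Theorem~\ref{thm:main}. In the converse direction I would run the same regularity theory for the fractional obstacle problem, again via \cite{CSS}, applied locally.
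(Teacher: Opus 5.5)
Your argument is essentially the paper's: you subtract \(|y|q\), identify \(V\) with the decaying harmonic extension of \(g=W(\cdot,0)\), and use the Dirichlet-to-Neumann identity together with \(C^{1,1/2}\) regularity to turn the Signorini complementarity conditions into \eqref{eq:half}, then invert via \(W=V-|y|q\) (your Liouville justification and the identification of \(\Delta W\) on the thin space with \(2\partial_y^+W\) only make explicit what the paper asserts). One slip to fix: since \(\partial_y^+V=\partial_y^+W+q(x,0)\), your first display should read \(-\partial_y^+W=(-\Delta)^{1/2}g+q(x,0)=(-\Delta)^{1/2}g-(c-\langle Mx,x\rangle)\), not \((-\Delta)^{1/2}g-q(x,0)\) (the factor ``\((-1)\cdot(-1)\)'' does not repair it), although your subsequent ``Concretely'' line and the translated conditions are correct.
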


\begin{proof}
Suppose first that \(W\) is given.
By the local optimal regularity for the Signorini problem,
\[
W\in C_{\mathrm{loc}}^{1,1/2}(\mathbb R^{n+1}_+);
\]
see \cite[Proposition~4.5]{FRsurvey}.
In particular, the one-sided normal derivative \(\partial_y^+W(\cdot,0)\) is well defined.

For \(y\ge0\), set
\[
V(x,y):=W(x,y)+yq(x,y).
\]
Since \(\Delta(yq)=0\), the function \(V\) is harmonic in \(\mathbb R^{n+1}_+\),
and \eqref{eq:asymptotic} gives \(V\to0\) at infinity.
Let
\[
g(x):=V(x,0)=W(x,0).
\]
Then
\[
g\in C_{\mathrm{loc}}^{1,1/2}(\mathbb R^n)
\]
and \(g(x)\to0\) as \(|x|\to\infty\).
Hence \(V\) is the decaying harmonic extension of \(g\), and the Dirichlet-to-Neumann
characterization of the half-Laplacian gives pointwise
\[
(-\Delta)^{1/2}g
=
-\partial_y^+V.
\]
Using
\[
q(x,0)=\langle Mx,x\rangle-c,
\]
we obtain
\[
(-\Delta)^{1/2}g
-
\bigl(c-\langle Mx,x\rangle\bigr)
=
-\partial_y^+W.
\]

The local regularity above allows us to read the distributional formulation \eqref{eq:thin}
on the thin space as the Signorini complementarity conditions
\[
W(\cdot,0)\ge0,
\qquad
\partial_y^+W\le0,
\qquad
W(\cdot,0)\,\partial_y^+W=0.
\]
Together with the identity above, they show that \(g\) satisfies \eqref{eq:half},
equivalently in the viscosity sense of Definition~\ref{def:viscosity}.
Moreover,
\[
\{g>0\}
=
\{W(\cdot,0)>0\}.
\]

Conversely, let \(g\) solve \eqref{eq:half}, let \(V\) be its decaying harmonic extension, and define
\[
W(x,y)
:=
V(x,|y|)
-
|y|q(x,y).
\]
By the same Signorini--half-Laplacian correspondence, the viscosity inequalities in \eqref{eq:half}
translate into the Signorini boundary conditions for \(W\); equivalently,
\[
\partial_y^+W
=
-(-\Delta)^{1/2}g
+c-\langle Mx,x\rangle
\le0,
\qquad
g\,\partial_y^+W=0,
\]
in the Dirichlet-to-Neumann sense.
Since \(W\) is harmonic away from the thin space and even in \(y\), it satisfies \eqref{eq:thin}.
Finally,
\[
W+|y|q=V\to0
\]
at infinity, so \eqref{eq:asymptotic} holds.
The two constructions are inverse to one another.
\end{proof}

\begin{proof}[Proof of Theorem~\ref{thm:FRY}]
By Lemma~\ref{lem:q}, after a translation in the thin variables, the polynomial \(q\) has the form
\[
q(x,y)
=
\langle Mx,x\rangle
-c
-\frac13(\operatorname{tr}M)y^2
\]
with \(M>0\) and \(c>0\).
By Proposition~\ref{prop:equiv}, \(g=W(\cdot,0)\) is the decaying viscosity solution of
\eqref{eq:half}.
Theorem~\ref{thm:main} with \(s=1/2\) gives
\[
W(x,0)
=
g(x)
=
K\bigl(1-\langle Bx,x\rangle\bigr)_+^{3/2}
\]
for some \(K>0\) and \(B\in\mathcal S^n_+\).
Moreover, Proposition~\ref{prop:equiv} gives
\[
\{W(\cdot,0)>0\}
=
\{g>0\}
=
\{x\in\mathbb R^n:\langle Bx,x\rangle<1\}.
\]
This proves the theorem.
\end{proof}

\begin{remark}[Weighted extension for general \(s\)]\label{rem:weighted}
For general \(0<s<1\), the Caffarelli--Silvestre extension identifies fractional obstacle problems
with weighted thin obstacle problems for
\[
L_a:=\operatorname{div}(|y|^a\nabla\cdot),
\qquad
a=1-2s;
\]
see \cite{extension} and, for a survey, \cite[Section~3.1]{FRsurvey}.
The restriction to \(s=1/2\) in this section comes from the fact that the classification of
Fern\'andez-Real and Yu used here concerns the classical unweighted thin obstacle problem.
\end{remark}

\section{Beyond quadratic forcing}\label{sec:beyond}

The explicit construction in this paper relies essentially on the quadratic structure of the forcing.
In forthcoming work, we treat the corresponding second-order problem with general concave forcing
and obtain a general power-concavity result.
The situation for nonlocal problems is much less complete.

For the restricted half-Laplacian, Kulczycki \cite{Kulczycki} proved that the torsion function is concave
in bounded convex planar domains.
More recently, Gallo and Squassina \cite[Section~7.3]{GalloSquassina} obtained perturbative interior
concavity estimates for fractional \(p\)-Laplacian eigenfunctions in the regime \(s\uparrow1\).
They also emphasize that very few concavity results are known for fractional equations.
To our knowledge, a general exact power-concavity theory for fractional Poisson equations with
spatially dependent forcing is not available.

It is therefore natural to ask which structural assumptions on \(f\) ensure power concavity of the solution of
\[
(-\Delta)^su=f
\quad\text{in }\Omega,
\qquad
u=0
\quad\text{in }\mathbb R^n\setminus\Omega,
\]
when \(\Omega\) is convex.
For the obstacle problem \eqref{eq:obstacle}, the related question is whether structural assumptions on a
general forcing imply convexity of the positivity set.
The quadratic case treated here provides an explicit model in which both the solution and its positivity
geometry can be described completely.

\section*{Conflict of interest}

The author declares no conflict of interest.

\section*{Funding}

This work was supported by the Japan Society for the Promotion of Science (JSPS)\\
KAKENHI Grant-in-Aid for Early-Career Scientists and Grant-in-Aid for JSPS Fellows
[grant number JP25KJ0086].

\section*{Acknowledgements}

During the preparation of this manuscript, the author used ChatGPT (OpenAI) to assist with language editing,
organization, literature searches, and consistency checks of the mathematical exposition and references.
The author independently verified all mathematical arguments, calculations, citations, and the final text,
and takes full responsibility for the content of the manuscript.


\begin{thebibliography}{99}

\bibitem{AJS}
N.~Abatangelo, S.~Jarohs, and A.~Salda\~na,
\emph{Fractional Laplacians on ellipsoids},
Mathematics in Engineering \textbf{3} (2021), no.~5, 1--34.

\bibitem{CSS}
L.~A.~Caffarelli, S.~Salsa, and L.~Silvestre,
\emph{Regularity estimates for the solution and the free boundary of the obstacle problem for the fractional Laplacian},
Invent. Math. \textbf{171} (2008), no.~2, 425--461.

\bibitem{extension}
L.~Caffarelli and L.~Silvestre,
\emph{An extension problem related to the fractional Laplacian},
Comm. Partial Differential Equations \textbf{32} (2007), 1245--1260.

\bibitem{CS}
L.~Caffarelli and L.~Silvestre,
\emph{Regularity theory for fully nonlinear integro-differential equations},
Comm. Pure Appl. Math. \textbf{62} (2009), 597--638.

\bibitem{ERW}
S.~Eberle, X.~Ros-Oton, and G.~S.~Weiss,
\emph{Characterizing compact coincidence sets in the thin obstacle problem and the obstacle problem for the fractional Laplacian},
Nonlinear Anal. \textbf{211} (2021), 112473.

\bibitem{EY}
S.~Eberle and H.~Yu,
\emph{Compact contact sets of sub-quadratic solutions to the thin obstacle problem},
Adv. Math. \textbf{444} (2024), 109635.

\bibitem{FRsurvey}
X.~Fern\'andez-Real,
\emph{The thin obstacle problem: a survey},
Publ. Mat. \textbf{66} (2022), no.~1, 3--55.

\bibitem{FRY}
X.~Fern\'andez-Real and H.~Yu,
\emph{Global solutions to the thin obstacle problem with superquadratic growth},
arXiv:2504.21492, 2025.

\bibitem{GalloSquassina}
M.~Gallo and M.~Squassina,
\emph{Concavity and perturbed concavity for \(p\)-Laplace equations},
J. Differential Equations \textbf{440} (2025), 113452.

\bibitem{Kulczycki}
T.~Kulczycki,
\emph{On concavity of solution of Dirichlet problem for the equation \((-\Delta)^{1/2}\phi=1\) in a convex planar region},
J. Eur. Math. Soc. \textbf{19} (2017), 1361--1420.

\bibitem{Kulpa}
W.~Kulpa,
\emph{The Poincar\'e--Miranda theorem},
Amer. Math. Monthly \textbf{104} (1997), no.~6, 545--550.

\bibitem{Silvestre}
L.~Silvestre,
\emph{Regularity of the obstacle problem for a fractional power of the Laplace operator},
Comm. Pure Appl. Math. \textbf{60} (2007), no.~1, 67--112.

\end{thebibliography}
\end{document}